\newtheorem{remark}{Remark}
\newtheorem{proposition}{Proposition}
\newtheorem{assumption}{Assumption}
\newtheorem{theorem}{Theorem}
\newtheorem{proof}{Proof}
\journal{Applied Mathematics and Computation}
\begin{document}

\begin{frontmatter}

%% Title, authors and addresses

%% use the tnoteref command within \title for footnotes;
%% use the tnotetext command for theassociated footnote;
%% use the fnref command within \author or \address for footnotes;
%% use the fntext command for theassociated footnote;
%% use the corref command within \author for corresponding author footnotes;
%% use the cortext command for theassociated footnote;
%% use the ead command for the email address,
%% and the form \ead[url] for the home page:
%% \title{Title\tnoteref{label1}}
%% \tnotetext[label1]{}
%% \author{Name\corref{cor1}\fnref{label2}}
%% \ead{email address}
%% \ead[url]{home page}
%% \fntext[label2]{}
%% \cortext[cor1]{}
%% \address{Address\fnref{label3}}
%% \fntext[label3]{}

\title{Goodness-of-fit test for the bivariate Hermite distribution}

%% use optional labels to link authors explicitly to addresses:
%% \author[label1,label2]{}
%% \address[label1]{}
%% \address[label2]{}

\author{Pablo Gonz\'alez-Albornoz}
\author{Francisco Novoa-Mu\~noz\corref{cor1}}

\address{Departamento de Estad\'istica, Universidad del B\'io-B\'io, Concepci\'on, Chile}
\cortext[cor1]{Coresponding author.} \ead{fnovoa@ubiobio.cl}

\begin{abstract}
This paper studies the goodness of fit test for the bivariate
Hermite distribution. Specifically, we propose and study a
Cram\'er-von Mises-type test based on the empirical probability
generation function. The bootstrap can be used to consistently
estimate the null distribution of the test statistics. A simulation
study investigates the goodness of the bootstrap approach for finite
sample sizes.
\end{abstract}

%%Graphical abstract
%\begin{graphicalabstract}
%\includegraphics{grabs}
%\end{graphicalabstract}

%%Research highlights
%\begin{highlights}
%\item Research highlight 1
%\item Research highlight 2
%\end{highlights}

\begin{keyword}
    Bivariate Hermite distribution \sep Goodness-of-fit \sep Empirical probability generating function \sep Bootstrap distribution estimator
%% keywords here, in the form: keyword \sep keyword

%% PACS codes here, in the form: \PACS code \sep code

%% MSC codes here, in the form: \MSC code \sep code
%% or \MSC[2008] code \sep code (2000 is the default)

\end{keyword}

\end{frontmatter}

%% \linenumbers

%% main text
\section{Introduction}
\label{intro}
The counting data can appear in different circumstances. In the univariate configuration, the Hermite distribution (HD) is a linear combination of the form $Y=X_1+2X_2$, where $X_1$ and $X_2$ are independent Poisson random variables. The properties that distinguish the HD is to be flexible when it comes to modeling counting data that present a multimodality, along with presenting several zeros, which is called zero-inflation. It also allows modeling data in which the overdispersion is moderate; that is, the variance is greater than the expected value. It was McKendrick in \citep{McK} who modeled a phagocytic experiment (bacteria count in leukocytes) through the HD, obtaining a more satisfactory model than with the Poisson distribution. However, in practice, the bivariate count data arise in several different disciplines and  bivariate Hermite distribution (BHD) plays an important role, having superinflated data. For example, the accident number on two different periods \citep{Cress}.\\

Testing the goodness of fit (gof) of observations given with a probabilistic model is a crucial aspect of data analysis. For the univariate case, we have only found a single test of gof, but for data that come from a generalized Hermite distribution (for a review, see Meintanis and Bassiakos in \citep{Meintanis}), but not from a HD. On the other hand, we did not find literature on gof tests for BHD.\\

The purpose of this paper is to propose and study a goodness-of-fit test for the bivariate Hermite Distribution that is consistent.\\

According to Novoa-Mu\~noz in \citep{NM19}, the probability
generating function (pgf) characterizes the distribution of a random
vector and can be estimated consistently by the empirical
probability generating function (epgf), the proposed test is a
function of the epgf. This statistical test compares the epgf of the
data with an estimator of the pgf of the BHD. As it is well known,
to establish the rejection region, we need to know the distribution
of the statistic test.

As for finite sample sizes the resulting test statistic is of the
Cram\'er-Von Mises type, it was not possible to calculate explicitly
the distribution of the statistic under null hypothesis. That is why
one uses simulation techniques. Therefore, we decided to use a null
approximation of the statistic by using a parametric bootstrap.

Because the properties of the proposed test are asymptotic (see, for example, \citep{Nov}) and with the purpose of evaluating the behavior of the test for sample of finite size, a simulation study was carried out.\\

The present work is ordered as follows. In section \ref{sec:2} we present some preliminary results that will serve us in the following chapters, the definition of the BHD with some of its properties is also given. In section \ref{sec:3}, the proposed statistic is presented. Section \ref{bootstrap} is devoted to showing the bootstrap estimator and its approximation to the null distribution of the statistic. Section \ref{NumericalResults} is dedicated to presenting the results of a simulation study,  power of a hypothesis test and the application to a set of real data.\\

Before ending this section, we introduce some notation: $\mathcal{F}_A\underset{\delta}{\wedge}\mathcal{F}_B$ denotes a mixture (compounding) distribution where $\mathcal{F}_A$ represents the original distribution and $\mathcal{F}_B$ the mixing distribution (i.e., the distribution of $\delta$) \citep{JKK05}; all vectors are row vectors and $x^\top$ is the transposed of the row vector $x$; %
for any vector $x,\, x_k$ denotes its $k$th coordinate, and $\|x\|$
its Euclidean norm; $\mathbb{N}_0=\{0,1,2,3,...\}$; $I\{A\}$ denotes
the indicator function of the set $A$; $P_{\theta}$ denotes the
probability law of the BHD with parameter $\theta$; $E_{\theta}$
denotes expectation with respect to the probability function
$P_{\theta}$; $P_{*}$ and $E_{*}$ denote the conditional probability
law and expectation, given the data $(X_1, Y_1), . . . , (X_n,
Y_n)$, respectively; all limits in this work are taken as $n
\rightarrow \infty; \mathop{\longrightarrow} \limits^{L}\ $ denotes
convergence in distribution;
%$\mathop{\longrightarrow} \limits^{P}\ $ denotes convergence in probability;
$\mathop{\longrightarrow} \limits^{a.s.}\ $ denotes  almost sure
convergence; let $\{C_n\}$ be a sequence of random variables or
random elements and let $ \epsilon\in\mathbb{R}$, then
$C_n=O_{_P}(n^{-\epsilon})$ means that $n^{\epsilon}C_n$ is bounded
in probability, $C_n=o_{_P}(n^{-\epsilon})$ means that
$n^{\epsilon}C_n\ \mathop{\longrightarrow} \limits^{P} 0$ and
$C_n=o(n^{-\epsilon})$ means that $n^{\epsilon}C_n\
\mathop{\longrightarrow} \limits^{a.s.} 0$ and
$\mathcal{H}=L^2\left([0,1]^2,\varrho\right)$ denotes the separable
Hilbert space of the measurable functions $\varphi,
\varrho:[0,1]^2\rightarrow \mathbb{R}$ such that
$||\varphi||_{\mathcal{H}}^{2}=\int_0^1\int_0^1
\varphi^2({t})\,\varrho({t})d{t}<\infty$.

\section{Preliminaries}
\label{sec:2}
Several definitions for the BHD have been given (see, for example, Kocherlakota and Kocherlakota in \citep{Kocher}). In this paper we will work with the following one, which has received more attention in the statistical literature (see, for example, Papageorgiou et al. in \citep{Papa}; Kemp et al. in \citep{kkemp}).\\

Let $\boldsymbol{X}=(X_1,X_2)$ has the bivariate Poisson
distribution with the parameters $\delta\lambda_1$,
$\delta\lambda_2$ and $\delta\lambda_3$ (for more details of this
distribution, see for example, Johnson et al. in \citep{JKB97}),
then  $\boldsymbol{X} \underset{\delta}{\wedge} N(\mu,\sigma^2)$ has
the BHD. Kocherlakota in \citep{kocher} got its pgf which is given
by
\begin{equation}
v(t;\theta)%=E\left(t_1^{X_1}t_2^{X_2}\right)
=exp\left(\mu \lambda+\frac{1}{2}\sigma^2
\lambda^2\right),\label{fgp}
\end{equation}
where $t=(t_1,t_2)$, $\theta=(\mu,\sigma^2,\lambda_1,\lambda_2,\lambda_3)$, $\lambda=\lambda_1(t_1-1)+\lambda_2(t_2-1)+\lambda_3(t_1t_2-1)$ and $\mu>\sigma^2(\lambda_i+\lambda_3)$, $i=1,2$.\\

From the pgf of the BHD, Kocherlakota and Kocherlakota
\citep{Kocher} obtained the probability mass function of the BHD,
which is given by
\[
f(r,s)=\frac{\lambda^r_1\lambda_2^s}{r!s!}M(\gamma)\sum_{k=0}^{min(r,s)}\binom{r}{k}\binom{s}{k}k!\,\xi^k
P_{r+s-k}(\gamma),
\]
where $M(x)$ is the moment-generating function of the normal
distribution, $P_r(x)$ is a polynomial of degree $r$ in $x$,
$\gamma=-(\lambda_1+\lambda_2+\lambda_3)$ and
$\xi=\frac{\lambda_3}{\lambda_1\lambda_2}$.

\begin{remark}
    If $\lambda_3=0$ then the probability function is reduced to
    \[
    f(r,s)=\frac{\lambda^r_1\lambda_2^s}{r!s!}\,M(-\lambda_1-\lambda_2)\, P_{r+s}(-\lambda_1-\lambda_2).
    \]
\end{remark}

\begin{remark}
    If $\boldsymbol{X}$ is a random vector that is bivariate Hermite distributed with parameter $\theta$ will be denoted $\boldsymbol{X}\sim BH(\theta)$, where $\theta\in \Theta$, and the parameter space is
    \[
    \Theta=\left\{(\mu,\sigma^2,\lambda_1,\lambda_2,\lambda_3)\in \mathbb{R}^5 /\mu>\sigma^2(\lambda_i+\lambda_3), \lambda_i> \lambda_3\geq 0,\, i=1,2\right\}.
    \]
\end{remark}

Let $\boldsymbol{X}_1=(X_{11},X_{12}),
\boldsymbol{X}_2=(X_{21},X_{22}), \ldots,
\boldsymbol{X}_n=(X_{n1},X_{n2})$ be independent and identically
distributed (iid) random vectors defined on a probability space
$(\Omega, \mathcal{A},\mathnormal{P})$ and taking values in
$\mathbb{N}_0^2$. In what follows, let
\[
v_n(t)=\displaystyle\frac{1}{n}\sum_{i=1}^n t_1^{X_{i1}}t_2^{X_{i2}}
\]
denote the epgf of $\boldsymbol{X}_1, \boldsymbol{X}_2, \ldots,\boldsymbol{X}_n$ for some appropriate $ W\subseteq  \mathbb{R}^2$.\\

In the next section we will develop our statistician and for this
the result given below will be fundamental, whose proof was
presented in \citep{Nov}.

\begin{proposition}\label{convfgp}
    Let $\textbf{X}_1,\ldots, \textbf{X}_n$  be iid from a random vector $\boldsymbol{X}=(X_1,X_2)\in \mathbb{N}_0^2$. Let $v(t)=E\!\left(t_1^{X_1} t_2^{X_2}\right)$  be the pgf of $\boldsymbol{X}$, defined on $W \subseteq \mathbb{R}^2$. Let $0\leq b_j\leq c_j<\infty,\ j=1, 2$, such that $Q=[b_1,c_1]\times [b_2,c_2] \subseteq W$, then
    \[
    \sup_{t\in Q} |v_n(t)-v(t)|\ \mathop{\longrightarrow}\limits^{a.s.} \ 0.
    \]
\end{proposition}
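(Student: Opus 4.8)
The plan is to upgrade the pointwise strong law of large numbers (SLLN) to a uniform statement over the compact rectangle $Q$, exploiting the monotonicity of the summands in a multivariate Glivenko--Cantelli (P\'olya-type) argument rather than any smoothness estimate.

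First I would establish pointwise almost sure convergence. Fix $t\in Q$ and set $Y_i(t)=t_1^{X_{i1}}t_2^{X_{i2}}$. Since $0\le b_j\le t_j\le c_j$ and the coordinates $X_{ij}$ are nonnegative integers, each $Y_i(t)$ is dominated by the integrable envelope $c_1^{X_{i1}}c_2^{X_{i2}}$, whose expectation equals $v(c_1,c_2)<\infty$ because the upper corner $(c_1,c_2)\in Q\subseteq W$ lies in the domain of the pgf. Hence $Y_1(t),Y_2(t),\ldots$ are iid with finite mean $E[Y_1(t)]=v(t)$, and the classical SLLN yields $v_n(t)\to v(t)$ almost surely for each fixed $t$.

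Second, I would exploit monotonicity. For nonnegative integers $x_1,x_2$ the map $t\mapsto t_1^{x_1}t_2^{x_2}$ is nondecreasing in each coordinate on $[0,\infty)^2$; consequently both $v_n$ and $v$ are coordinatewise nondecreasing on $Q$. Moreover $v$ is continuous on $Q$ (a convergent power series in the interior, continuous up to $Q\subseteq W$), hence uniformly continuous on the compact set $Q$. Given $\varepsilon>0$, I would choose a finite grid on each axis fine enough that the oscillation of $v$ over every grid cell is below $\varepsilon$. The pointwise convergence of the first step holds simultaneously at the finitely many grid vertices on a single almost sure event. For any $t$ lying in a cell with lower vertex $s$ and upper vertex $s'$, monotonicity gives the sandwich $v_n(s)\le v_n(t)\le v_n(s')$ and likewise $v(s)\le v(t)\le v(s')$, so
\[
v_n(t)-v(t)\ \le\ \bigl[v_n(s')-v(s')\bigr]+\bigl[v(s')-v(s)\bigr],
\]
with a symmetric lower bound obtained through $s$. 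On the almost sure event the bracketed grid terms vanish as $n\to\infty$ while the oscillation terms stay below $\varepsilon$; taking the supremum over $t\in Q$ and then letting $\varepsilon\downarrow 0$ along the sequence $\varepsilon=1/k$ delivers $\sup_{t\in Q}|v_n(t)-v(t)|\to 0$ almost surely.

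The main obstacle I anticipate is organizing the quantifiers so the exceptional null sets do not accumulate: the SLLN must be invoked only at the finitely many grid vertices attached to each fixed $\varepsilon$, and the countably many grids (one per $\varepsilon=1/k$) must then be combined on a single null set. Beyond this bookkeeping and the verification that $v$ is continuous up to the boundary of $Q$, the argument is routine; the monotonicity sandwich is what allows the passage from pointwise to uniform convergence without any Lipschitz or equicontinuity control on the unbounded summands.
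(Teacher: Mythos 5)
Your argument is correct, but it is not the route the paper takes: the paper does not actually prove Proposition~\ref{convfgp} in-text, deferring instead to \citep{Nov}, where the result is obtained functional-analytically --- one regards $Z_i=\bigl(t\mapsto t_1^{X_{i1}}t_2^{X_{i2}}\bigr)$ as iid random elements of the separable Banach space $C(Q)$ with sup norm, notes the integrable envelope $E\|Z_1\|_\infty = E\bigl(c_1^{X_1}c_2^{X_2}\bigr)=v(c_1,c_2)<\infty$ since $(c_1,c_2)\in Q\subseteq W$, and invokes the strong law of large numbers in separable Banach spaces (Mourier's theorem), which delivers $\sup_{t\in Q}|v_n(t)-v(t)|\to 0$ a.s.\ in a single stroke. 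You replace this with the classical P\'olya/Glivenko--Cantelli scheme: scalar SLLN at finitely many grid vertices, a coordinatewise-monotonicity sandwich $v_n(s)\le v_n(t)\le v_n(s')$ valid because $b_j\ge 0$, uniform continuity of $v$ on the compact $Q$, and the $\varepsilon=1/k$ bookkeeping to keep the exceptional sets countable --- all of which is rigorous and complete as stated. The trade-off is clear: the Banach-space proof is essentially one line given the machinery and sits naturally beside the Hilbert-space limit theory the paper uses later (Theorems~\ref{ConvDebil} and~\ref{boot} work with function-space-valued processes), whereas your proof is elementary and self-contained, needing only the scalar SLLN plus monotonicity, with no vector-valued integration; it also makes transparent exactly which structural features ($t\ge 0$, integer exponents, integrable corner envelope) drive the uniformity. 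One small polish: justify the continuity of $v$ on all of $Q$ (boundary included) by dominated convergence with the envelope $c_1^{X_1}c_2^{X_2}$ rather than by your power-series parenthetical --- boundary continuity of a multivariate power series is an Abel-type subtlety you do not need, while dominated convergence settles it immediately and in the same breath confirms that $v$ is coordinatewise nondecreasing, as your sandwich requires.
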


\section{The test statistic and its asymptotic null distribution}
\label{sec:3} Let $\textbf{X}_1=(X_ {11}, X_ {12}), \textbf{X}_2=(X_
{21}, X_ {22}), \ldots,\textbf{X}_n=(X_ {n1}, X_ {n2})$ be iid from
a random vector  $\boldsymbol{X}=(X_1,X_2)\in \mathbb{N}_0^2$. Based
on the sample
$\boldsymbol{X}_1,\boldsymbol{X}_2,\ldots,\boldsymbol{X}_n$, the
objective is to test the hypothesis
\[H_0 : (X_1,X_2)\sim BH(\theta), \ \text{for some}\ \theta\in\Theta,\]
against the alternative
$$H_1 : (X_1,X_2)\  \nsim \ BH(\theta), \ \forall\theta \in\Theta.$$

With this purpose, we will recourse to some of the properties of the pgf that allow us to propose the following statistical test.\\

According to Proposition \ref{convfgp}, a consistent estimator of
the pgf is the epgf. If $H_0$ is true  and $\hat{{\theta}}_n$ is a
consistent estimator of  ${\theta}$, then $v(t;\hat{{\theta}}_n)$
consistently estimates the population pgf. Since the distribution of
$\boldsymbol{X}=(X_1,X_2)$ is uniquely determined by its pgf,
$v(t)$, $t=(t_1, t_2)\in [0,1]^2$, a reasonable test for testing
$H_0$  should reject the null hypothesis for large values of
$V_{n,w}(\hat{{\theta}}_n)$ defined by
\begin{equation}\label{Estad-Rnw} V_{n,w}(\hat{{\theta}}_n)=\int_0^1\int_0^1 V^2_n(t;\hat{{\theta}}_n)w(t)dt,
\end{equation}
where  $$V_n(t;\theta)= \sqrt{n}\left\{v_n(t)-v(t;
\theta)\right\},$$
$\hat{\theta}_n=\hat{{\theta}}_n(\boldsymbol{X}_1,
\boldsymbol{X}_2,\ldots,\boldsymbol{X}_n)$  is a consistent
estimator of $\theta$ and $w(t)$ is a measurable weight function,
such that  $w(t)\geq 0, \ \forall t\in [0,1]^2$, and
\begin{equation} \label{int-funcion-peso}
\int_0^1\int_0^1 w(t)dt < \infty.
\end{equation}
The assumption (\ref{int-funcion-peso}) on $w$ ensures that the
double integral in (\ref{Estad-Rnw}) is finite for each fixed $n$.
Now, to determine what are large values of
$V_{n,w}(\hat{\theta}_n)$, we must calculate its null distribution,
or at least an approximation to it. Since the null distribution of
$V_{n,w}(\hat{\theta}_n)$ is unknown, we first try to estimate it by
means of its asymptotic null distribution. In order to derive it we
will assume that the estimator $\hat{\theta}_n$ satisfies the
following regularity condition.
\begin{assumption}\label{Sup1}
    Under $H_0$, if $\,\theta= (\mu,\sigma^2,\lambda_1,\lambda_2,\lambda_3)\in\Theta\,$ denotes the true parameter value, then
    \[\sqrt{n}\left(\hat{\theta}_n-\theta\right) =\frac{1}{\sqrt{n}}\,\sum_{i=1}^n \boldsymbol{\ell}\left(\boldsymbol{X}_{i}; \theta\right)+\mathbf{o}_{_P}(1),\]
    where $\boldsymbol{\ell}:\mathbb{N}^2_0\times \Theta \longrightarrow \mathbb{R}^5$ is such that $E_{\theta}\left\{\boldsymbol{\ell}\, (\boldsymbol{X}_{{1}}; \theta)\right\}=\mathbf{0}$ and $J(\theta)=E_{\theta}\left\{\boldsymbol{\ell} \left(\boldsymbol{X}_{{1}};\theta\right)^{\top}\boldsymbol{\ell} \left(\boldsymbol{X}_{{1}}; \theta\right)\right\}< \infty$.
\end{assumption}
Assumption 1 is fulfilled by most commonly used estimators, see \cite{Kocher} and \cite{PL88}.\\

The next result gives  the asymptotic null distribution of
$V_{n,w}(\hat{\theta}_n)$.

\begin{theorem}\label{ConvDebil}
    Let  $\boldsymbol{X}_{1},\ldots, \boldsymbol{X}_{n}$ be iid from $\boldsymbol{X}=(X_{1},X_{2})\sim BH(\theta)$. Suppose that
    Assumption 1  holds.
    Then
    \[
    V_{n,w}(\hat{\theta}_n)= ||W_n||_{_\mathcal{H}}^2+o_{_P}(1),
    \]
    where \
    $W_n(t)=\frac{1}{\sqrt{n}}\sum_{i=1}^n V^0(\boldsymbol{X}_i,{\theta}; {t})$, with
    $$V^0(\boldsymbol{X}_i,\theta;t)=t_1^{X_{i1}} t_2^{X_{i2}}-v(t;\theta)\left\{1+\left(\lambda,\frac{1}{2}\lambda^2,\eta(t_1-1),\eta(t_2-1),\eta (t_1t_2-1)\right)\boldsymbol{\ell} \left(\boldsymbol{X}_i;\theta\right)^{\top}\right\},$$
    $i=1, \ldots, n,\ \eta=\mu+\sigma^2\lambda$.\
    Moreover,
    \begin{equation}\label{lambdas}
    V_{n,w}(\hat{\theta}_n)\mathop{\longrightarrow}\limits^{L} \sum_{j\geq 1}\lambda_j\chi^2_{1j},
    \end{equation}
    where $\chi^2_{11},\chi^2_{12},\ldots$ are independent $\chi^2$ variates with one degree of freedom and the set $\{\lambda_j\}$ are the non-null eigenvalues of the operator $C(\theta)$ defined on the function space $\{\tau:\mathbb{N}_0^2\to \mathbb{R},  \text{such that} \ E_{\theta}\left\{\tau^2(\boldsymbol{X})\right\}<\infty,\forall \theta\in\Theta\}$, as follows
    \[
    C(\theta) \tau(\boldsymbol{x})= E_{\theta}\{h(\boldsymbol{x},\boldsymbol{Y};\theta) \tau(\boldsymbol{Y})\},
    \]
    where
    \begin{equation} \label{h}
    h(\boldsymbol{x},\boldsymbol{y};\theta)=\int_0^1\int_0^1  V^0(\boldsymbol{x}; \theta; t) V^0(\boldsymbol{y}; \theta; t)w(t)dt.
    \end{equation}
\end{theorem}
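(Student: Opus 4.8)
The plan is to establish two things: first the asymptotic equivalence $V_{n,w}(\hat\theta_n)=\|W_n\|_{\mathcal H}^2+o_P(1)$, and second the weak-limit representation as a weighted sum of independent $\chi^2_1$ variates. I would begin with the linearization. Write $V_n(t;\hat\theta_n)=\sqrt n\{v_n(t)-v(t;\hat\theta_n)\}$ and perform a first-order Taylor expansion of $v(t;\hat\theta_n)$ about the true value $\theta$. Since $v(t;\theta)=\exp(\mu\lambda+\tfrac12\sigma^2\lambda^2)$ is smooth in $\theta$, its gradient is $v(t;\theta)$ times the vector of partial derivatives of $\mu\lambda+\tfrac12\sigma^2\lambda^2$ with respect to $(\mu,\sigma^2,\lambda_1,\lambda_2,\lambda_3)$; a direct computation gives exactly the factor $\bigl(\lambda,\tfrac12\lambda^2,\eta(t_1-1),\eta(t_2-1),\eta(t_1t_2-1)\bigr)$ with $\eta=\mu+\sigma^2\lambda$. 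Substituting the expansion for $\sqrt n(\hat\theta_n-\theta)$ supplied by Assumption~\ref{Sup1} turns $V_n(t;\hat\theta_n)$ into $W_n(t)+R_n(t)$, where $R_n$ collects the $\mathbf o_P(1)$ term from the assumption together with the quadratic Taylor remainder.

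The next step is to control these remainders uniformly so that the difference passes through the $\mathcal H$-norm. I would show $\|R_n\|_{\mathcal H}=o_P(1)$ by bounding the Taylor remainder uniformly on the compact square $[0,1]^2$ (where $v$ and all its derivatives are bounded and $\sqrt n(\hat\theta_n-\theta)=O_P(1)$ gives a quadratic term of order $O_P(n^{-1/2})$) and by using that the linear coefficient functions are bounded on $[0,1]^2$, so the $\mathbf o_P(1)$ contribution from Assumption~\ref{Sup1} is negligible in $\mathcal H$-norm thanks to $\int w<\infty$. Then the elementary inequality $\bigl|\,\|a\|^2-\|b\|^2\,\bigr|\le \|a-b\|\,(\|a-b\|+2\|b\|)$ applied with $a=V_n(\cdot;\hat\theta_n)$ and $b=W_n$, together with tightness of $\|W_n\|_{\mathcal H}$, yields $V_{n,w}(\hat\theta_n)=\|W_n\|_{\mathcal H}^2+o_P(1)$.

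For the limit law I would view $W_n$ as a normalized sum of the iid centered $\mathcal H$-valued random elements $V^0(\boldsymbol X_i,\theta;\cdot)$; one checks $E_\theta\{V^0(\boldsymbol X_1,\theta;t)\}=0$ using $E_\theta\{t_1^{X_1}t_2^{X_2}\}=v(t;\theta)$ and the zero-mean property of $\boldsymbol\ell$, and that $E_\theta\|V^0(\boldsymbol X_1,\theta;\cdot)\|_{\mathcal H}^2<\infty$ using boundedness on $[0,1]^2$ and $J(\theta)<\infty$. The central limit theorem in the separable Hilbert space $\mathcal H$ then gives $W_n\xrightarrow{L}W$, a centered Gaussian element whose covariance operator is precisely $C(\theta)$ with kernel $h$ from \eqref{h}. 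By the continuous mapping theorem $\|W_n\|_{\mathcal H}^2\xrightarrow{L}\|W\|_{\mathcal H}^2$, and the classical Karhunen--Loève expansion of $\|W\|_{\mathcal H}^2$ diagonalizes the covariance operator, producing $\sum_{j\ge1}\lambda_j\chi^2_{1j}$ with the $\lambda_j$ the nonzero eigenvalues of $C(\theta)$ and the $\chi^2_{1j}$ independent. Combining this with the first part via Slutsky completes the proof.

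I expect the main obstacle to be the uniform control of the Taylor remainder in the $\mathcal H$-norm, i.e.\ verifying rigorously that the quadratic term in the expansion of $v(t;\hat\theta_n)$ is $o_P(1)$ after integration against $w$; this requires a genuine (not merely pointwise) bound on the second derivatives of $v(t;\theta)$, valid in a neighborhood of $\theta$ intersected with the region traversed by $\hat\theta_n$, and a careful argument that $\hat\theta_n$ stays in such a neighborhood with probability tending to one. The Hilbert-space CLT and the eigen-decomposition are comparatively standard once the random elements are shown to be square-integrable in $\mathcal H$.
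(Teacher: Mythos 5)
Your proof is correct, and its first half is essentially the paper's argument: the paper writes $V_n(t;\hat{\theta}_n)=\frac{1}{\sqrt{n}}\sum_{i=1}^n V(\boldsymbol{X}_i;\hat{\theta}_n;t)$, Taylor-expands in the parameter, identifies $E_{\theta}\{Q^{(1)}(\boldsymbol{X}_1;\theta;t)\}=-v(t;\theta)\bigl(\lambda,\tfrac{1}{2}\lambda^2,\eta(t_1-1),\eta(t_2-1),\eta(t_1t_2-1)\bigr)$, and absorbs the quadratic remainder $q_n$ together with the $\mathbf{o}_{_P}(1)$ term of Assumption \ref{Sup1} into a term that is $o_{_P}(1)$ in $\mathcal{H}$-norm. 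Your version is marginally cleaner, since you expand the deterministic function $v(t;\hat{\theta}_n)$ directly, so the linear coefficient is already the population gradient; the paper's Markov-inequality/LLN step is in fact vacuous here because $Q^{(1)}(\boldsymbol{X}_i;\theta;t)=-\nabla_{\vartheta}v(t;\vartheta)\vert_{\vartheta=\theta}$ does not depend on $\boldsymbol{X}_i$ (the paper inherits that step from a more general template). Where you genuinely diverge is the limit law: the paper observes that $\|W_n\|_{_\mathcal{H}}^2=\frac{1}{n}\sum_{i,j}h(\boldsymbol{X}_i,\boldsymbol{X}_j;\theta)$ is a degenerate V-statistic with kernel $h$ of (\ref{h}) satisfying $E_{\theta}\{h(\boldsymbol{X}_1,\boldsymbol{X}_2;\theta)\}=0$, $E_{\theta}\{h^2(\boldsymbol{X}_1,\boldsymbol{X}_2;\theta)\}<\infty$ and $E_{\theta}\{|h(\boldsymbol{X}_1,\boldsymbol{X}_1;\theta)|\}<\infty$, and invokes Theorem 6.4.1.B of \citep{Ser80}, which delivers (\ref{lambdas}) directly with the $\lambda_j$ the non-null eigenvalues of $C(\theta)$ acting on functions on $\mathbb{N}_0^2$; you instead use the Hilbert-space CLT, the continuous mapping theorem and a Karhunen--Lo\`eve expansion, which is the route the paper itself takes for the bootstrap statistic in Theorem \ref{boot}. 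Both routes work, but note what each buys: the V-statistic argument yields the eigenvalue description in terms of $h$ with no further work, whereas your route requires one additional (standard) observation that you currently elide --- the covariance operator of the Gaussian limit on $\mathcal{H}$ has kernel $K(u,v)=E_{\theta}\{V^0(\boldsymbol{X}_1,\theta;u)V^0(\boldsymbol{X}_1,\theta;v)\}$, not $h$, and identifying its non-zero spectrum with that of $C(\theta)$ rests on the fact that the two operators are of the form $TT^*$ and $T^*T$ for a common $T$, hence share non-null eigenvalues. Your anticipated obstacle (uniform control of the quadratic Taylor remainder in $\mathcal{H}$-norm) is real but routine: on the compact $[0,1]^2$ the second derivatives of $v(t;\vartheta)$ are bounded uniformly for $\vartheta$ near $\theta$, $\|\hat{\theta}_n-\theta\|=O_{_P}(n^{-1/2})$ keeps $\widetilde{\theta}$ in such a neighborhood with probability tending to one, and (\ref{int-funcion-peso}) passes the bound through the norm; the paper handles this with the same argument you sketch, asserting $\|q_n\|_{\mathcal{H}}=o_{_P}(1)$ without elaboration.
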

%
%   $$V^0(\boldsymbol{X}_i,\theta;t)=V(\boldsymbol{X}_i,\theta;t)+E_{\theta}  \left\{Q^{(1)}(\boldsymbol{X}_i,\theta;t)\right\}\boldsymbol{\ell} \left(\boldsymbol{X}_i;\theta\right)^{\top},\ i=1, \ldots, n,$$
%
\begin{proof}
    By definition,
    $V_{n,w}(\hat{\theta}_n)= \|V_n(\hat{\theta}_n)\|_{_\mathcal{H}}^{2}$. Note that
    \begin{equation}\label{def_V}
    V_n(t;\hat{\theta}_n)=\frac{1}{\sqrt{n}}\sum_{i=1}^n V(\boldsymbol{X}_i;\hat{\theta}_n;t),\ \ \text{with}\ \ V(\boldsymbol{X}_i;\theta;t)=t_1^{X_{i1}} t_2^{X_{i2}}-v(t;\theta).
    \end{equation}
    By Taylor expansion of $V(\boldsymbol{X}_i;\hat{\theta}_n;t)$ around $\hat{\theta}_n=\theta$,
    \begin{equation}\label{Z-kn-aprox}
    V_n(t;\hat{\theta}_n)=\frac{1}{\sqrt{n}}\sum_{i=1}^n V(\boldsymbol{X}_i;\theta; t)+\frac{1}{n}\sum_{i=1}^nQ^{(1)}(\boldsymbol{X}_i;\theta; t)\,\sqrt{n}(\hat{\theta}_n-\theta)^{\top}+q_{n},
    \end{equation}
    where
    $q_{n}=\frac{1}{2\sqrt{n}}(\hat{\theta}_n-\theta) \sum_{i=1}^n Q^{(2)}(\boldsymbol{X}_i;\widetilde{\theta}; t) \,(\hat{\theta}_n-\theta)^{\top}$,
    $\widetilde{\theta}=\alpha \hat{\theta}_n+(1-\alpha)\theta$, for some $0<\alpha<1\,$, $Q^{(1)}(\boldsymbol{x};\vartheta; t)$ is the vector of the first derivatives and $Q^{(2)}(\boldsymbol{x};\vartheta; t)$ is the matrix of the second derivatives of $V(\boldsymbol{x};\vartheta; t)$ with respect to $\vartheta$.\\

    Thus, considering (\ref{int-funcion-peso}) results
    \begin{equation}\label{E-Qkj-acotada}
    E_{\theta}\left\{\left\|Q^{(1)}_{j}\left(\boldsymbol{X}_{1};\theta; t\right)\right\|_{_\mathcal{H}}^2\right\}<\infty,\ \ j=1,2,\ldots,5.
    \end{equation}
    Using the Markov inequality and (\ref{E-Qkj-acotada}), we have
    \begin{align}
    &P_{\theta}\left[\left\| \frac{1}{n}\sum_{i=1}^n Q^{(1)}_{j}(\boldsymbol{X}_i;\theta; t)-E_{\theta}\left\{Q^{(1)}_{j}(\boldsymbol{X}_1;\theta; t)\right\} \right\|_{_\mathcal{H}}>\varepsilon\right]\notag\\[.2 cm]
    &\hspace{43mm}\leq \frac{1}{n\, \varepsilon^2}\,E_{\theta}\left[\left\|Q^{(1)}_{j} (\boldsymbol{X}_1;\theta; t)\right\|_{_\mathcal{H}}^2\right]\to 0,\ \ j=1,2,\ldots,5.\notag
    \end{align}
    Then,
    \[
    \frac{1}{n}\sum_{i=1}^n Q^{(1)}(\boldsymbol{X}_i;\theta; t)\mathop{\longrightarrow} \limits^{P}E_{\theta}\left\{Q^{(1)}(\boldsymbol{X}_1;\theta; t)\right\},
    \]
    where $E_{\theta}\left\{Q^{(1)}(\boldsymbol{X}_1;\theta; t)\right\}=-v(t;\theta)\left(\lambda,\frac{1}{2}\lambda^2,\eta(t_1-1),\eta(t_2-1),\eta (t_1t_2-1)\right).$\\

    As $\|q_{n}\|_{\mathcal{H}}=o_{_P}(1)$, then using Assumption \ref{Sup1}, (\ref{Z-kn-aprox}) can be written as
    \[
    V_n(t;\hat{\theta}_n)=S_{n}(t;\theta)+s_{n},
    \]
    where $\|s_{n}\|_{\mathcal{H}}=o_{_P}(1)$, and
    \[S_n(t;\theta)=\frac{1}{\sqrt{n}}\sum_{i=1}^n \left[V(\boldsymbol{X}_i;\theta; t)+E_{\theta}\left\{Q^{(1)}(\boldsymbol{X}_1;\theta; t)\right\}  \boldsymbol{\ell}\left(\boldsymbol{X}_i; \theta\right)^{\top}\right].\]

    On the other hand, observe that
    \[
    \|S_{n}(\theta)\|_{_\mathcal{H}}^{2}=\frac{1}{n} \sum_{i=1}^n\sum_{j=1}^n h(\boldsymbol{X}_i,\boldsymbol{X}_j;\theta),\]
    where $h(\boldsymbol{x},\boldsymbol{y};\theta)$ is defined in (\ref{h}) and satisfies $\,h(\boldsymbol{x},\boldsymbol{y};\theta)= h(\boldsymbol{y},\boldsymbol{x};\theta)$,
    $E_{\theta}\left\{h^2(\boldsymbol{X}_1,\boldsymbol{X}_2;\theta)\right\}< \infty$, $\,E_{\theta} \left\{|h(\boldsymbol{X}_1,\boldsymbol{X}_1;\theta)|\right\} < \infty$ and $\,E_{\theta} \left\{h(\boldsymbol{X}_1,\boldsymbol{X}_2;\theta)\right\} = 0$. Thus, from Theorem 6.4.1.B in Serfling \citep{Ser80},
    \[
    \|S_{n}(\theta)\|_{_\mathcal{H}}^{2}
    \mathop{\longrightarrow} \limits^{\!L}\ \sum_{j\geq 1}\lambda_j\,\chi^2_{1j}\,,
    \]
    where $\chi^2_{11},\chi^2_{12},\ldots$ and the set $\{\lambda_j\}$ are as defined in the statement of the Theorem. In particular, $\|S_{n}(\theta)\|_{_\mathcal{H}}^{2}=O_P(1)$, which implies (\ref{lambdas}).
    \qed %\smartqed
\end{proof}

The asymptotic null distribution of $V_{n, w}(\hat{\theta}_n)$ depends on the unknown true value of the parameter $\theta$, therefore, in practice, they do not provide a useful solution to the problem of estimating the null distribution of the respective statistical tests. This could be solved by replacing $\theta$ with $\hat{\theta}$.\\

But a greater difficulty is to determine the sets $\{\lambda_j\}_{j \geq 1}$, most of the cases, calculating the eigenvalues of an operator is not a simple task and in our case, we must also obtain the expression $h(\boldsymbol{x}, \boldsymbol{y}; \theta)$, which is not easy to find, since it depends on the function $\boldsymbol{\ell}$, which usually does not have a simple expression.\\

Thus, in the next section we consider another way to approximate the null distribution of the statistical test, the parametric bootstrap method.\\

\section{The bootstrap estimator}\label{bootstrap}
An alternative way to estimate the null distribution is through the parametric bootstrap method.\\

Let $\boldsymbol{X}_{1}, \ldots, \boldsymbol{X}_{n}$ be iid taking
values in $\mathbb{N}_0^2$. Assume that
$\hat{\theta}_n=\hat{\theta}_n(\boldsymbol{X}_1,
\ldots,\boldsymbol{X}_n)\in \Theta$. Let $\boldsymbol{X}^*_{1},
\ldots, \boldsymbol{X}^*_{n}$ be iid from a population with
distribution $BH(\hat{\theta}_{n})$, given $\boldsymbol{X}_{1},
\ldots, \boldsymbol{X}_{n}$, and let $V^*_{n,w}(\hat{\theta}_n^*)$
be the bootstrap version of $V_{n,w}(\hat{\theta}_n)$ obtained by
replacing $\boldsymbol{X}_{1}, $ $\ldots, \boldsymbol{X}_{n}$ and
$\hat{\theta}_n  =  \hat{\theta}_n (\boldsymbol{X}_{1}, \ldots,
\boldsymbol{X}_{n})$ by $\boldsymbol{X}^*_{1}, \ldots,
\boldsymbol{X}^*_{n}$ and $ \hat{\theta}_n^{*}=
\hat{\theta}_n(\boldsymbol{X}^*_{1}, \ldots, \boldsymbol{X}^*_{n})$,
respectively, in the expression of $V_{n,w}(\hat{\theta}_n)$. Let
$P_*$ denote the bootstrap conditional probability law, given
$\boldsymbol{X}_{1}, \ldots, \boldsymbol{X}_{n}$. In order to show
that the bootstrap  consistently estimate the null distribution of
$V_{n,w}(\hat{\theta}_n)$ we will assume the following assumption,
which is a bit stronger than Assumption 1.
\begin{assumption}\label{Sup2} Assumption 1 holds and the functions $\boldsymbol{\ell}$ and $J$ satisfy,
    \begin{enumerate}
        \item [(1)]  \vspace{-2pt} $\sup_{\vartheta\in\Theta_0} E_{\vartheta}\left[\|\boldsymbol{\ell} (\boldsymbol{X}; \vartheta)\|^2 I\left\{\|\boldsymbol{\ell}(\boldsymbol{X}; \vartheta)\|>\gamma \right\}\right]\longrightarrow 0$, as $\gamma \to \infty$, where $\Theta_0 \subseteq \Theta$ is an open neighborhood of $\theta$.
        \item [(2)] \vspace{3pt} $\boldsymbol{\ell} (\boldsymbol{X}; \vartheta)$ is continuous as function of $\vartheta$ at $\vartheta=\theta$ and $J(\vartheta)$ is finite $\forall \vartheta \in \Theta_0$.
    \end{enumerate}
\end{assumption}

As stated after Assumption 1,  Assumption  2 is not restrictive since it is fulfilled by commonly used estimators.\\

The next theorem shows that the bootstrap distribution of
$V_{n,w}(\hat{\theta}_n)$ consistently estimates its null
distribution.

\begin{theorem}\label{boot}
    Let $\boldsymbol{X}_{1},\ldots, \boldsymbol{X}_{n}$ be iid from a random vector $\boldsymbol{X}=(X_{1},X_{2}) \in \mathbb{N}_0^2$. Suppose that  Assumption 2 holds and that  $ \hat{\theta}_n=\theta+o(1)$, for some $\theta\in \Theta$. Then
    $$\sup_{x\in\mathbb{R}} \left|P_*\left\{V^*_{n,w}(\hat{\theta}^{*}_n) \leq x\right\}-P_{\theta}\left\{V_{n,w} (\hat{\theta}_n )\leq x\right\}\right|\ \mathop{\longrightarrow}\limits^{a.s.}\ 0.$$
\end{theorem}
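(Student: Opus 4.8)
The plan is to establish bootstrap consistency by mimicking the asymptotic analysis of Theorem \ref{ConvDebil} in the bootstrap world and then invoking a triangular-array weak convergence argument conditional on the data. Since $V_{n,w}(\hat\theta_n)$ converges in distribution to $\sum_{j\geq 1}\lambda_j\chi^2_{1j}$, whose distribution function is continuous, by P\'olya's theorem it suffices to show that the bootstrap statistic $V^*_{n,w}(\hat\theta^*_n)$ converges in distribution (conditionally on the data, almost surely) to the \emph{same} limit law. Because the limiting distribution function is continuous, conditional weak convergence upgrades automatically to uniform convergence of the distribution functions, which is exactly the claimed $\sup_x|\cdot|\to 0$ statement.

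First I would reproduce the Taylor-expansion decomposition used in the proof of Theorem \ref{ConvDebil}, but now for the bootstrap sample $\boldsymbol{X}^*_1,\ldots,\boldsymbol{X}^*_n$ drawn from $BH(\hat\theta_n)$ and the bootstrap estimator $\hat\theta^*_n$. This yields
\[
V^*_n(t;\hat\theta^*_n)=S^*_n(t;\hat\theta_n)+s^*_n,
\]
where $S^*_n$ is the bootstrap analogue of $S_n$, built from $V^0(\boldsymbol{X}^*_i,\hat\theta_n;t)$, and $\|s^*_n\|_{\mathcal H}=o_{P_*}(1)$ almost surely. The key point is that, conditionally on the data, $\boldsymbol{X}^*_1,\ldots,\boldsymbol{X}^*_n$ are iid from $BH(\hat\theta_n)$ with $\hat\theta_n\to\theta$, so the summands $V^0(\boldsymbol{X}^*_i,\hat\theta_n;t)$ are iid centered $\mathcal H$-valued random elements whose covariance operator converges to $C(\theta)$. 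I would then apply a Hilbert-space central limit theorem for triangular arrays (in the spirit of the conditional CLT used for bootstrap empirical processes) to conclude that $\|S^*_n(\hat\theta_n)\|_{\mathcal H}^2$ converges, given the data and almost surely, to $\sum_{j\geq 1}\lambda_j\chi^2_{1j}$, the same limit as in Theorem \ref{ConvDebil}.

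The two ingredients that make this conditional CLT go through are exactly what Assumption \ref{Sup2} provides. Part (1), the uniform integrability of $\|\boldsymbol{\ell}(\boldsymbol{X};\vartheta)\|^2$ over a neighborhood $\Theta_0$ of $\theta$, supplies the Lindeberg-type negligibility condition for the bootstrap triangular array, since $\hat\theta_n$ eventually lies in $\Theta_0$; it guarantees that the variance of $S^*_n$ does not leak to infinity as the resampling distribution drifts with $n$. Part (2), continuity of $\boldsymbol{\ell}(\boldsymbol{X};\vartheta)$ and finiteness of $J(\vartheta)$ on $\Theta_0$, ensures that the bootstrap covariance kernel $h(\boldsymbol{x},\boldsymbol{y};\hat\theta_n)$ converges to $h(\boldsymbol{x},\boldsymbol{y};\theta)$ and hence that the eigenvalues $\{\lambda_j(\hat\theta_n)\}$ converge to $\{\lambda_j(\theta)\}$. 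Together these let me identify the conditional limit as the unconditional null limit.

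The main obstacle, as is typical for bootstrap consistency results of this type, is the conditional weak convergence step: one must verify the Lindeberg condition and the convergence of the covariance operator for a \emph{triangular array} whose common distribution $BH(\hat\theta_n)$ changes with $n$ and is itself random through the data. Controlling this drift uniformly, and establishing that the negligible bootstrap remainder $\|s^*_n\|_{\mathcal H}=o_{P_*}(1)$ holds \emph{almost surely} rather than merely in probability, is where the strengthened moment and continuity hypotheses of Assumption \ref{Sup2} are essential and where the delicate estimates lie. Once the conditional CLT in $\mathcal H$ is secured and the eigenvalue continuity is established, the continuous mapping theorem applied to $\|\cdot\|_{\mathcal H}^2$ together with P\'olya's theorem finishes the argument.
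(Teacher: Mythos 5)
Your proposal follows essentially the same route as the paper's proof: a bootstrap version of the Taylor decomposition from Theorem \ref{ConvDebil}, a conditional Hilbert-space CLT for the triangular array built from $V^0(\boldsymbol{X}^*_i;\hat{\theta}_n;t)$ (the paper verifies conditions (i)--(iii) of Theorem 1.1 of Kundu et al., with the covariance kernel converging almost surely by the SLLN and Assumption \ref{Sup2}), followed by the continuous mapping theorem, with your explicit appeal to P\'olya's theorem merely making precise the final uniform-convergence step that the paper leaves implicit. The one cosmetic difference is that the paper never needs your eigenvalue-continuity step $\lambda_j(\hat{\theta}_n)\to\lambda_j(\theta)$: it identifies the conditional limit directly as the Gaussian process $\mathcal{Z}$ with covariance operator $C$ associated with the true $\theta$, so the substance of the argument is unchanged.
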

\begin{proof}
    By definition, $V^*_{n,w}(\hat{\theta}_n^*)= \|V^*_n(\hat{\theta}_n^*)\|_{_\mathcal{H}}^{2}$, with
    \[V^*_n(t;\hat{\theta}_n^*)=\frac{1}{\sqrt{n}}\sum_{i=1}^n V(\boldsymbol{X}_i^*;\hat{\theta}_n^*;t)\]
    and $V(\boldsymbol{X};\theta; t)$ defined in (\ref{def_V}).\\

    Following similar steps to those given in the proof of Theorem \ref{ConvDebil} it can be seen that
    $V_{n,w}^*(\hat{\theta}_n^*)= \|W_{n}^*\|_{\mathcal{H}}^{2}+o_{P_*}(1)$,  where $W_{n}^*(t)$ is defined as $W_{n}(t)$ with
    $\boldsymbol{X}_i$ and $\theta$ replaced by $\boldsymbol{X}^*_i$ and $\hat{\theta}_n$, respectively.\\

    To derive the result, first we will check that assumptions (i)–(iii) in Theorem 1.1 of Kundu et al. \citep{Kundu} hold.\\

    Observe that
    \[Y_n^*(t)=\sum_{i=1}^n Y_{ni}^*(t)\]
    where
    \[Y_{ni}^*(t)= \frac{1}{\sqrt{n}}\,V^0(\boldsymbol{X}_i^*;\hat{\theta}_n; t),\ \ i=1,\ldots, n,\]

    Clearly $E_{*}\left\{Y_{ni}^*\right\}=0$ and $E_{*}\left\{\|Y_{ni}^*\|_{_\mathcal{H}}^{2}\right\}<\infty$. Let $K_n$ be the covariance kernel of $Y^{*}_n$ , which by SLLN satisfies
    \begin{align}
    K_n(u,v) &= E_*\!\left\{Y_n^*(u) Y_n^*(v)\right\}\notag\\
    &= E_*\left\{V^0(\boldsymbol{X}_1^*;\hat{\theta}_n; u) V^0(\boldsymbol{X}_1^*;\hat{\theta}_n; v)\right\}\notag\\
    &\hspace{4mm} \mathop{\longrightarrow}\limits^{a.s.} E_{\theta}\left\{V^0(\boldsymbol{X}_1;\theta; u) V^0(\boldsymbol{X}_1;\theta; v)\right\} = K(u,v).\notag
    \end{align}

    Moreover, let $\mathcal{Z}$ be zero-mean Gaussian process on $\mathcal{H}$ whose operator of covariance $C$ is characterized by
    \begin{align}
    \langle C f, h\rangle_{_{\mathcal{H}}}&=cov\left(\langle \mathcal{Z}, f\rangle_{_{\mathcal{H}}}, \langle \mathcal{Z}, h\rangle_{_{\mathcal{H}}}\right)\notag\\
    &=\int_{[0,1]^4} K(u,v)f(u)h(v) w(u)w(v)dudv.\notag
    \end{align}

    From the central limit theorem in Hilbert spaces (see, for example, van der Vaart and Wellner \citep{van}), it follows that $Y_n= \frac{1}{\sqrt{n}}\sum_{i=1}^n V^0(\boldsymbol{X}_i;\theta; t)\stackrel{L}{\longrightarrow} \mathcal{Z}$ on $\mathcal{H}$, when the data are iid from the random vector $\boldsymbol{X}\sim HB(\theta)$.\\

    Let $C_n$ denote the covariance operator of $Y_n^*$ and let $\{e_k: \, k \geq 0\}$ be an orthonormal basis of $\mathcal{H}$. Let $f,h \in \mathcal{H}$, by dominated convergence theorem,
    \begin{align} \lim_{n\to \infty}\,\langle C_n e_k, e_l\rangle_{_{\mathcal{H}}} &=\lim_{n\to \infty}\int_{[0,1]^4} K_n(u,v)e_k(u)e_l(v)w(u)w(v)du dv\notag\\ %&=\int_{[0,1]^4} e_k(u)K^S(u,v)e_l(v)^{\top}w(u)w(v)dudv
    &=\langle C e_k, e_l\rangle_{_{\mathcal{H}}}.\notag
    \end{align}

    Setting $a_{kl}= \langle C e_k, e_l\rangle_{_{\mathcal{H}}}$ in the aforementioned Theorem 1.1, this proves that condition (i) holds. To verify condition (ii), by
    using monotone convergence theorem, Parseval’s relation and dominated convergence theorem, we get
    \begin{align} \lim_{n\to \infty}\sum_{k=0}^{\infty} \langle C_{n} e_k,e_k\rangle_{_{\mathcal{H}}} &= \lim_{n\to \infty}\sum_{k=0}^{\infty}  \int_{[0,1]^4} K_n(u,v)e_k(u)e_k(v)w(u)w(v)du dv\notag\\ &=\sum_{k=0}^{\infty}  \int_{[0,1]^4} K(u,v)e_k(u)e_k(v)w(u)w(v)du dv=\sum_{k=0}^{\infty} \langle Ce_k,e_k\rangle_{_{\mathcal{H}}}\notag\\ &= \sum_{k=0}^{\infty} a_{kk}=\sum_{k=0}^{\infty}  E_{\theta}\!\left\{\langle \mathcal{Z},e_k\rangle^2_{_{\mathcal{H}_1}}\right\}= E_{\theta}\!\left\{\|\mathcal{Z}\|_{_{\mathcal{H}}}^{\,2}\right\}< \infty.\notag \end{align}

    To prove condition (iii), we first notice that
    \[\left|\langle Y_{ni}^*,e_k \rangle_{_{\mathcal{H}}}\right| \leq \frac{M}{\sqrt{n}},\ i=1,\ldots, n,\ \forall n, \ \text{where} \ 0<M<\infty.\]
    From the above inequality, for each fixed $\varepsilon > 0$,
    \[E_*\left[\langle Y_{ni}^*,e_k\rangle^2_{_{\mathcal{H}}}\ I\left\{\left|\langle Y_{ni}^*,e_k \rangle_{_{\mathcal{H}}}\right|>\varepsilon\right\}\right]=0.\]
    for sufficiently large $n$. This proves condition (iii). Therefore, $Y^*_n\stackrel{L}{\longrightarrow} \mathcal{Z}$ in $\mathcal{H}$, a.s. Now the result follows from the continuous mapping theorem.
    \qed %\smartqed
\end{proof}

From Theorem \ref{boot}, the test function
$$\Psi^*_V=\left\{
\begin{array}{ll}
1, & \text{if}\ V^*_{n,w}(\hat{\theta}^*_n)\geq v^*_{n,w,\alpha}, \\
0, & \text{otherwise},
\end{array}
\right.$$ or equivalently, the test that rejects $H_0$ when
$p^*=P_*\{V^*_{n,w}(\hat{\theta}^{*}_n) \geq V_{obs}\}\leq \alpha,$
is asymptotically correct in the sense that when $H_0$ is true,
$\lim P_{\theta}(\Psi^*_V=1)=\alpha$, where
$v^*_{n,w,\alpha}=\inf\{x:P_*(V^*_{n,w} (\hat{\theta}^*_n)\geq
x)\leq \alpha\}$ is the $\alpha$ upper percentile of the bootstrap
distribution of $V_{n,w} (\hat{\theta}_n)$ and $V_{obs}$ is the
observed value of the test statistic.

\section{Numerical results}\label{NumericalResults}

According to Novoa-Muñoz and Jiménez-Gamero in \citep{Nov}, the properties of the statistic $V_{n,w}(\hat{{\theta}}_n)$ are asymptotic, that is, such properties describe the behavior of the test proposed for large samples. To study the goodness of the bootstrap approach for samples of finite size, a simulation experiment was carried out. In this section we describe this experiment and provide a summary of the results that have been obtained.\\

All computer calculations made in this paper were carried out
through the use of programs written in the R language \cite{R}.

To calculate $V_{n,w}(\hat{\theta}_n)$ it is necessary to give an
explicit form to the weight function $w$. Here the following is
taken into account
\begin{equation}\label{funcion-peso-explicita}
w(t;a_1,a_2)=t_1^{a_1}t_2^{a_2}.
\end{equation}

Observe that the only restrictions that have been imposed on the
weight function are that $w$ be positive almost everywhere in
$[0,1]^2$ and the established in (\ref{int-funcion-peso}). The
function $w(t;a_1,a_2)$ given in (\ref{funcion-peso-explicita})
meets these conditions whenever $a_i>-1$, $i=1,2$. Hence
\[
V_{n,w}(\hat{\theta}_n)=n\int_{0}^{1}\int_{0}^{1}\left[\sum_{i=1}^{n}t_1^{X_{i1}}t_2^{X_{i2}}-exp\left(\hat{\mu}\hat{\lambda}+\frac{1}{2}\hat{\sigma}^2\hat{\lambda}^2\right)\right]^2t_1^{a_1}\,t_2^{a_2}\,dt_1dt_2.
\]
It was not possible to find an explicit form of the statistic
$V_{n,w}(\hat{\theta}_n)$, for which, its calculation was used the
curvature package of R \cite{R} to calculate it.

\subsection{Simulated data}

In order to approximate  of the null distribution of the statistic
$V_{n,w}(\hat{\theta}_n)$ for finite-size samples samples of size
$30$ , $50$ and $70$ from a $BH(\theta)$, for
$\theta=(\mu,\sigma^2,\lambda_1,\lambda_2,\lambda_3)$,
using the pgf (\ref{fgp}), with $\lambda_3=0$ were utilised. The combinations of parameters were chosen in such a way that $\mu>\sigma^2(\lambda_i+\lambda_3)$, $i=1,2$. \\

The selected values of the other parameters were $\mu\in\{1.0,1.5,2.0\}$, $\sigma^2\in\{0.8,1.0\}$, $\lambda_1\in\{0.10,0.25,0.50,0.75,1.00\}$ and $\lambda_2\in\{0.20,0.25,0.50,0.75\}$.\\

The selected values of $\lambda_1$ and $\lambda_2$ were not greater than 1 since the Hermite distribution is characterized as being zero-inflated.\\

To estimate the parameter $\theta$ we use the maximum likelihood
method given in Kocherlakota and Kocherlakota \citep{Kocher}. Then
we approximated the bootstrap $p-$values of the proposed test with
weight function given in (\ref{funcion-peso-explicita}) for
$(a_1,a_2)\in \{(0,0), (1,0),(0,1),(1,1),(5,1),(1,5),(5,5)\}$ and we
generate $B=500$
bootstrap samples.\\

The above procedure was repeated 1000 times and the fraction of the estimated $p-$values that were found to be less than or equal to 0.05 and 0.10, which are the estimates type I error probabilities for $\alpha=$ 0.05 and 0.1.\\

The results obtained are presented in Tables
\ref{my-label1}-\ref{my-label7} for the different pairs $(a_1,
a_2)$. In each table, the established order was growing in $\mu$ and
$\sigma^2$, and for each new $\mu$ increasing values in $\lambda_1$,
and in each new $\lambda_1$, increasing values for $\lambda_2$.
From these results we can conclude that the parametric bootstrap method provides good approximations to the null distribution of the $V_{n,w}(\hat{\theta}_n)$ in most of the cases considered.\\

It is seen that the values of $a_1$ and $a_2$ of the weight function
affects bootstrap estimates of $p-$values.

From the tables it is clear that the bootstrap $p-$values are increasingly approaching the nominal value as $n$ increases. These approximations are better when $a_1=a_2$. In particular, when $a_1=a_2$ are small (less than 5), then the bootstrap p-values are approached from the left (below) to the nominal value, otherwise it happens when $a_1=a_2$ are fairly large values (greater or equal to 5). Table \ref{my-label4} is the one that shows the best results, being the weight function with $a_1=a_2=1$ which presents the best $p-$values estimates.\\

Unfortunately we could not find a closed form for our statistic
$V_{n,w}(\hat{\theta}_n)$, so to calculate it we used the curvature
package of the software R \cite{R}. This had a serious impact on the
computation time since the simulations were increased in their
execution time by at least 30\%.
\begin{table}%[h]
    \caption{Simulation results for the probability of type I error for $a_1=0$ and $a_2=0$.}
    \label{my-label1}
    \begin{tabular}{ccccccc}
        \hline\noalign{\smallskip}
        & \multicolumn{2}{c}{\textbf{$n=30$}}           & \multicolumn{2}{c}{\textbf{$n=50$}}           & \multicolumn{2}{c}{\textbf{$n=70$}}           \\
        \cline{2-7} \noalign{\smallskip}
        $\theta$ &  $\alpha$=0.05 & $\alpha$=0.1 & $\alpha$=0.05 & $\alpha$=0.1 & $\alpha$=0.05 & $\alpha$=0.1 \\
        \noalign{\smallskip}\hline\noalign{\smallskip}
        (1.0,0.8,0.10,0.20,0.00) & 0.012 & 0.053 & 0.029 & 0.069 & 0.037 & 0.081 \\
        (1.0,0.8,0.25,0.25,0.00) & 0.027 & 0.067 & 0.037 & 0.064 & 0.043 & 0.094 \\
        (1.0,0.8,0.50,0.20,0.00) & 0.016 & 0.062 & 0.046 & 0.073 & 0.047 & 0.087 \\
        (1.0,0.8,0.50,0.50,0.00) & 0.025 & 0.063 & 0.042 & 0.076 & 0.044 & 0.091 \\ (1.5,1.0,0.50,0.50,0.00) & 0.010 & 0.064 & 0.035 & 0.078 & 0.042 & 0.089 \\
        (1.5,1.0,0.50,0.75,0.00) & 0.010 & 0.065 & 0.036 & 0.084 & 0.041 & 0.084 \\
        (1.5,1.0,0.75,0.25,0.00) & 0.017 & 0.071 & 0.038 & 0.087 & 0.043 & 0.088 \\
        (1.5,1.0,1.00,0.25,0.00) & 0.027 & 0.076 & 0.039 & 0.090 & 0.042 & 0.092 \\
        (2.0,1.0,0.25,0.75,0.00) & 0.017 & 0.067 & 0.038 & 0.082 & 0.047 & 0.089 \\
        (2.0,1.0,0.50,0.25,0.00) & 0.011 & 0.067 & 0.037 & 0.088 & 0.045 & 0.091 \\
        (2.0,1.0,0.75,0.25,0.00) & 0.029 & 0.070 & 0.035 & 0.087 & 0.043 & 0.089 \\
        \noalign{\smallskip}
        \hline
    \end{tabular}
\end{table}
\begin{table}%[h]
    \caption{Simulation results for the probability of type I error for $a_1=1$ and $a_2=0$.}
    \label{my-label2}
    \begin{tabular}{ccccccc}
        \hline\noalign{\smallskip}
        & \multicolumn{2}{c}{\textbf{$n=30$}}           & \multicolumn{2}{c}{\textbf{$n=50$}}           & \multicolumn{2}{c}{\textbf{$n=70$}}           \\
        \cline{2-7} \noalign{\smallskip}
        $\theta$ &  $\alpha$=0.05 & $\alpha$=0.1 & $\alpha$=0.05 & $\alpha$=0.1 & $\alpha$=0.05 & $\alpha$=0.1 \\
        \noalign{\smallskip}\hline\noalign{\smallskip}
        (1.0,0.8,0.10,0.20,0.00) & 0.010 & 0.039 & 0.025 & 0.073 & 0.043 & 0.088 \\
        (1.0,0.8,0.25,0.25,0.00) & 0.025 & 0.073 & 0.037 & 0.088 & 0.041 & 0.104 \\
        (1.0,0.8,0.50,0.20,0.00) & 0.027 & 0.072 & 0.041 & 0.083 & 0.045 & 0.086 \\
        (1.0,0.8,0.50,0.50,0.00) & 0.035 & 0.053 & 0.042 & 0.072 & 0.045 & 0.101 \\ (1.5,1.0,0.50,0.50,0.00) & 0.011 & 0.064 & 0.031 & 0.080 & 0.038 & 0.085 \\
        (1.5,1.0,0.50,0.75,0.00) & 0.019 & 0.065 & 0.034 & 0.078 & 0.039 & 0.080 \\
        (1.5,1.0,0.75,0.25,0.00) & 0.025 & 0.081 & 0.038 & 0.085 & 0.042 & 0.084 \\
        (1.5,1.0,1.00,0.25,0.00) & 0.037 & 0.074 & 0.035 & 0.085 & 0.040 & 0.086 \\
        (2.0,1.0,0.25,0.75,0.00) & 0.027 & 0.071 & 0.034 & 0.082 & 0.047 & 0.089 \\
        (2.0,1.0,0.50,0.25,0.00) & 0.011 & 0.077 & 0.031 & 0.084 & 0.044 & 0.086 \\
        (2.0,1.0,0.75,0.25,0.00) & 0.019 & 0.080 & 0.035 & 0.085 & 0.044 & 0.087 \\ \noalign{\smallskip}
        \hline
    \end{tabular}
\end{table}
\begin{table}%[h]
    \caption{Simulation results for the probability of type I error for $a_1=0$ and $a_2=1$.}
    \label{my-label3}
    \begin{tabular}{ccccccc}
        \hline\noalign{\smallskip}
        & \multicolumn{2}{c}{\textbf{$n=30$}}           & \multicolumn{2}{c}{\textbf{$n=50$}}           & \multicolumn{2}{c}{\textbf{$n=70$}}           \\
        \cline{2-7} \noalign{\smallskip}
        $\theta$ &  $\alpha$=0.05 & $\alpha$=0.1 & $\alpha$=0.05 & $\alpha$=0.1 & $\alpha$=0.05 & $\alpha$=0.1 \\
        \noalign{\smallskip}\hline\noalign{\smallskip}
        (1.0,0.8,0.10,0.20,0.00) & 0.014 & 0.044 & 0.029 & 0.067 & 0.043 & 0.088 \\
        (1.0,0.8,0.25,0.25,0.00) & 0.028 & 0.068 & 0.039 & 0.079 & 0.042 & 0.084 \\
        (1.0,0.8,0.50,0.20,0.00) & 0.019 & 0.063 & 0.042 & 0.083 & 0.057 & 0.092 \\
        (1.0,0.8,0.50,0.50,0.00) & 0.029 & 0.063 & 0.045 & 0.075 & 0.054 & 0.089 \\ (1.5,1.0,0.50,0.50,0.00) & 0.011 & 0.066 & 0.039 & 0.079 & 0.042 & 0.089 \\
        (1.5,1.0,0.50,0.75,0.00) & 0.013 & 0.070 & 0.043 & 0.082 & 0.043 & 0.087 \\
        (1.5,1.0,0.75,0.25,0.00) & 0.017 & 0.081 & 0.042 & 0.089 & 0.043 & 0.092 \\
        (1.5,1.0,1.00,0.25,0.00) & 0.037 & 0.086 & 0.045 & 0.091 & 0.045 & 0.093 \\
        (2.0,1.0,0.25,0.75,0.00) & 0.047 & 0.077 & 0.048 & 0.084 & 0.047 & 0.089 \\
        (2.0,1.0,0.50,0.25,0.00) & 0.014 & 0.077 & 0.037 & 0.089 & 0.043 & 0.093 \\
        (2.0,1.0,0.75,0.25,0.00) & 0.027 & 0.080 & 0.041 & 0.097 & 0.044 & 0.096 \\  \noalign{\smallskip}
        \hline
    \end{tabular}
\end{table}
\begin{table}%[h]
    \caption{Simulation results for the probability of type I error for $a_1=1$ and $a_2=1$.}
    \label{my-label4}
    \begin{tabular}{ccccccc}
        \hline\noalign{\smallskip}
        & \multicolumn{2}{c}{\textbf{$n=30$}}           & \multicolumn{2}{c}{\textbf{$n=50$}}           & \multicolumn{2}{c}{\textbf{$n=70$}}           \\
        \cline{2-7} \noalign{\smallskip}
        $\theta$ &  $\alpha$=0.05 & $\alpha$=0.1 & $\alpha$=0.05 & $\alpha$=0.1 & $\alpha$=0.05 & $\alpha$=0.1 \\
        \noalign{\smallskip}\hline\noalign{\smallskip}
        (1.0,0.8,0.10,0.20,0.00) & 0.016 & 0.073 & 0.024 & 0.086 & 0.048 & 0.092 \\
        (1.0,0.8,0.25,0.25,0.00) & 0.032 & 0.058 & 0.037 & 0.088 & 0.049 & 0.091 \\
        (1.0,0.8,0.50,0.20,0.00) & 0.024 & 0.064 & 0.043 & 0.085 & 0.048 & 0.089 \\
        (1.0,0.8,0.50,0.50,0.00) & 0.033 & 0.072 & 0.043 & 0.086 & 0.049 & 0.093 \\ (1.5,1.0,0.50,0.50,0.00) & 0.030 & 0.072 & 0.038 & 0.088 & 0.046 & 0.090 \\
        (1.5,1.0,0.50,0.75,0.00) & 0.033 & 0.071 & 0.042 & 0.084 & 0.047 & 0.098 \\
        (1.5,1.0,0.75,0.25,0.00) & 0.036 & 0.097 & 0.039 & 0.097 & 0.049 & 0.099 \\
        (1.5,1.0,1.00,0.25,0.00) & 0.039 & 0.088 & 0.046 & 0.090 & 0.049 & 0.093 \\
        (2.0,1.0,0.25,0.75,0.00) & 0.031 & 0.087 & 0.044 & 0.092 & 0.048 & 0.099 \\
        (2.0,1.0,0.50,0.25,0.00) & 0.035 & 0.068 & 0.039 & 0.081 & 0.047 & 0.093 \\
        (2.0,1.0,0.75,0.25,0.00) & 0.037 & 0.080 & 0.045 & 0.088 & 0.049 & 0.096 \\  \noalign{\smallskip}
        \hline
    \end{tabular}
\end{table}
\begin{table}%[h]
    \caption{Simulation results for the probability of type I error for $a_1=1$ and $a_2=5$.}
    \label{my-label5}
    \begin{tabular}{ccccccc}
        \hline\noalign{\smallskip}
        & \multicolumn{2}{c}{\textbf{$n=30$}}           & \multicolumn{2}{c}{\textbf{$n=50$}}           & \multicolumn{2}{c}{\textbf{$n=70$}}           \\
        \cline{2-7} \noalign{\smallskip}
        $\theta$ &  $\alpha$=0.05 & $\alpha$=0.1 & $\alpha$=0.05 & $\alpha$=0.1 & $\alpha$=0.05 & $\alpha$=0.1 \\
        \noalign{\smallskip}\hline\noalign{\smallskip}
        (1.0,0.8,0.10,0.20,0.00) & 0.014 & 0.037 & 0.032 & 0.075 & 0.051 & 0.093 \\
        (1.0,0.8,0.25,0.25,0.00) & 0.023 & 0.074 & 0.053 & 0.090 & 0.060 & 0.113 \\
        (1.0,0.8,0.50,0.20,0.00) & 0.036 & 0.101 & 0.062 & 0.110 & 0.064 & 0.117 \\
        (1.0,0.8,0.50,0.50,0.00) & 0.023 & 0.080 & 0.042 & 0.107 & 0.063 & 0.109 \\ (1.5,1.0,0.50,0.50,0.00) & 0.022 & 0.081 & 0.037 & 0.111 & 0.046 & 0.108 \\
        (1.5,1.0,0.50,0.75,0.00) & 0.039 & 0.095 & 0.048 & 0.108 & 0.056 & 0.108 \\
        (1.5,1.0,0.75,0.25,0.00) & 0.034 & 0.108 & 0.048 & 0.107 & 0.054 & 0.108 \\
        (1.5,1.0,1.00,0.25,0.00) & 0.037 & 0.107 & 0.059 & 0.109 & 0.054 & 0.107 \\
        (2.0,1.0,0.25,0.75,0.00) & 0.048 & 0.106 & 0.056 & 0.108 & 0.054 & 0.106 \\
        (2.0,1.0,0.50,0.25,0.00) & 0.025 & 0.107 & 0.047 & 0.108 & 0.045 & 0.108 \\
        (2.0,1.0,0.75,0.25,0.00) & 0.043 & 0.107 & 0.045 & 0.107 & 0.043 & 0.106 \\   \noalign{\smallskip}
        \hline
    \end{tabular}
\end{table}
\begin{table}%[h]
    \caption{Simulation results for the probability of type I error for $a_1=5$ and $a_2=1$.}
    \label{my-label6}
    \begin{tabular}{ccccccc}
        \hline\noalign{\smallskip}
        & \multicolumn{2}{c}{\textbf{$n=30$}}           & \multicolumn{2}{c}{\textbf{$n=50$}}           & \multicolumn{2}{c}{\textbf{$n=70$}}           \\
        \cline{2-7} \noalign{\smallskip}
        $\theta$ &  $\alpha$=0.05 & $\alpha$=0.1 & $\alpha$=0.05 & $\alpha$=0.1 & $\alpha$=0.05 & $\alpha$=0.1 \\
        \noalign{\smallskip}\hline\noalign{\smallskip}
        (1.0,0.8,0.10,0.20,0.00) & 0.015 & 0.040 & 0.032 & 0.062 & 0.042 & 0.081 \\
        (1.0,0.8,0.25,0.25,0.00) & 0.034 & 0.076 & 0.045 & 0.101 & 0.048 & 0.104 \\
        (1.0,0.8,0.50,0.20,0.00) & 0.028 & 0.084 & 0.048 & 0.073 & 0.053 & 0.089 \\
        (1.0,0.8,0.50,0.50,0.00) & 0.028 & 0.069 & 0.045 & 0.079 & 0.054 & 0.098 \\ (1.5,1.0,0.50,0.50,0.00) & 0.019 & 0.071 & 0.035 & 0.078 & 0.042 & 0.099 \\
        (1.5,1.0,0.50,0.75,0.00) & 0.044 & 0.104 & 0.048 & 0.098 & 0.056 & 0.104 \\
        (1.5,1.0,0.75,0.25,0.00) & 0.027 & 0.107 & 0.038 & 0.105 & 0.046 & 0.103 \\
        (1.5,1.0,1.00,0.25,0.00) & 0.037 & 0.117 & 0.043 & 0.112 & 0.060 & 0.107 \\
        (2.0,1.0,0.25,0.75,0.00) & 0.037 & 0.112 & 0.039 & 0.108 & 0.054 & 0.108 \\
        (2.0,1.0,0.50,0.25,0.00) & 0.026 & 0.077 & 0.034 & 0.109 & 0.055 & 0.109 \\
        (2.0,1.0,0.75,0.25,0.00) & 0.034 & 0.116 & 0.045 & 0.107 & 0.056 & 0.105 \\    \noalign{\smallskip}
        \hline
    \end{tabular}
\end{table}
\begin{table}%[h]
    \caption{Simulation results for the probability of type I error for $a_1=5$ and $a_2=5$.}
    \label{my-label7}
    \begin{tabular}{ccccccc}
        \hline\noalign{\smallskip}
        & \multicolumn{2}{c}{\textbf{$n=30$}}           & \multicolumn{2}{c}{\textbf{$n=50$}}           & \multicolumn{2}{c}{\textbf{$n=70$}}           \\
        \cline{2-7} \noalign{\smallskip}
        $\theta$ &  $\alpha$=0.05 & $\alpha$=0.1 & $\alpha$=0.05 & $\alpha$=0.1 & $\alpha$=0.05 & $\alpha$=0.1 \\
        \noalign{\smallskip}\hline\noalign{\smallskip}
        (1.0,0.8,0.10,0.20,0.00) & 0.017 & 0.035 & 0.032 & 0.065 & 0.050 & 0.089 \\
        (1.0,0.8,0.25,0.25,0.00) & 0.027 & 0.077 & 0.034 & 0.081 & 0.043 & 0.084 \\
        (1.0,0.8,0.50,0.20,0.00) & 0.030 & 0.086 & 0.042 & 0.087 & 0.048 & 0.104 \\
        (1.0,0.8,0.50,0.50,0.00) & 0.013 & 0.069 & 0.030 & 0.076 & 0.045 & 0.105 \\ (1.5,1.0,0.50,0.50,0.00) & 0.016 & 0.063 & 0.035 & 0.078 & 0.046 & 0.087 \\
        (1.5,1.0,0.50,0.75,0.00) & 0.019 & 0.085 & 0.061 & 0.089 & 0.054 & 0.094 \\
        (1.5,1.0,0.75,0.25,0.00) & 0.031 & 0.071 & 0.053 & 0.102 & 0.047 & 0.098 \\
        (1.5,1.0,1.00,0.25,0.00) & 0.037 & 0.086 & 0.049 & 0.104 & 0.052 & 0.102 \\
        (2.0,1.0,0.25,0.75,0.00) & 0.015 & 0.087 & 0.057 & 0.098 & 0.055 & 0.101 \\
        (2.0,1.0,0.75,0.25,0.00) & 0.040 & 0.097 & 0.054 & 0.102 & 0.053 & 0.102 \\     \noalign{\smallskip}
        \hline
    \end{tabular}
\end{table}

\subsection{The power of a hypothesis test}

To study the power we repeated the previous experiment for samples
of size $n=50 $ and for the weight function we used the values of
$a_1$ and $a_2$ that yielded the best results in the study of type I
error. The alternative distributions we use are detailed below:
%To study the power we repeated the above experiment for samples with size $n=50$ from the following  alternatives:
\begin{itemize}
    \item [\textbullet] bivariate binomial distribution $BB(m; p_1, p_2, p_3)$, where $p_1+p_2-p_3\leq 1$, $p_1\geq p_3$, $p_2\geq p_3$ and $p_3>0$,
    \item [\textbullet] \vskip .2 cm bivariate Poisson distribution $BP(\lambda_1,\lambda_2,\lambda_3)$, where $\lambda_1>\lambda_3$, $\lambda_2>\lambda_3>0$,
    \item [\textbullet] \vskip .2 cm bivariate logarithmic series distribution $BLS(\lambda_1,\lambda_2,\lambda_3)$, where $0<\lambda_1+\lambda_2+\lambda_3<1$,
    \item [\textbullet] \vskip .2 cm bivariate negative binomial distribution $BNB(\nu; \gamma_0, \gamma_1, \gamma_2)$, where $\nu\in \mathbb{N}, \gamma_0>\gamma_2,\gamma_1>\gamma_2$ and $\gamma_2>0$,
    \item [\textbullet] \vskip .2 cm bivariate Neyman type A distribution $BNTA(\lambda;\lambda_1,\lambda_2,\lambda_3)$, where $0<\lambda_1+\lambda_2+\lambda_3\leq 1$,
    \item [\textbullet] \vskip .2 cm bivariate Poisson distribution mixtures of the form $pBP(\theta)+(1-p)BP(\lambda)$, where $0<p<1$, denoted by $BPP(p;\theta,\lambda)$.\\
\end{itemize}
Table \ref{power1} displays the alternatives considered and the
estimated power for nominal significance level $\alpha=0.05$.
Analyzing this table we can conclude that all the considered tests,
denoted by $V_{(a_1,a_2)}$, are able to detect the alternatives
studied and with a good power, giving better results in cases where
$a_1=a_2$. The best result was achieved for $a_1=a_2=1$, as
expected, as occurred in the study of type I error.
\begin{table}%[h]
    \caption{Simulation results for the power. The values are in the form of percentages, rounded to    the nearest integer.}
    \label{power1}
    \begin{tabular}{lrrrrr}
        \hline\noalign{\smallskip}
        Alternative &  $V_{(0,0)}$ & $V_{(1,0)}$ & $V_{(1,1)}$ & $V_{(1,5)}$ & $V_{(5,5)}$ \\
        \hline%%\hline
        $BB(1;0.41,0.02,0.01)$ &   87 & 81 & 89 & 81 & 85 \\
        $BB(1;0.41,0.03,0.02)$ &   85 & 82 & 88 & 80 & 86 \\
        $BB(2;0.61,0.01,0.01)$ &   93 & 84 & 98 & 83 & 92 \\
        $BB(1;0.61,0.03,0.02)$ &   95 & 89 & 100 & 87 & 95 \\
        $BB(2;0.71,0.01,0.01)$ &   94 & 86 & 100 & 85 & 93 \\ [.08 cm]
        %\hline%%\hline
        $BP(1.00,1.00,0.25)$   &   85 & 76 & 89 & 77 & 82 \\
        $BP(1.00,1.00,0.50)$   &   84 & 77 & 91 & 72 & 85 \\
        $BP(1.00,1.00,0.75)$   &   87 & 75 & 92 & 73 & 83 \\
        $BP(1.50,1.00,0.31)$   &   87 & 77 & 93 & 75 & 87 \\
        $BP(1.50,1.00,0.92)$   &   86 & 76 & 92 & 77 & 87 \\ [.08 cm]
        %\hline%%\hline
        $BLS(0.25,0.15,0.10)$   &  94 & 85 & 98 & 86 & 95 \\
        $BLS(5d/7,d/7,d/7)^*$   &  91 & 85 & 100 & 84 & 90 \\
        $BLS(3d/4,d/8,d/8)^*$   &  90 & 86 & 100 & 84 & 90 \\
        $BLS(7d/9,d/9,d/9)^*$   &  94 & 86 & 100 & 83 & 93 \\
        $BLS(0.51,0.01,0.02)$   &  90 & 83 & 98 & 83 & 91 \\ [.08 cm]
        %\hline%%\hline
        $BNB(1;0.92,0.97,0.01)$  &  93 & 87 & 96 & 85 & 92\\
        $BNB(1;0.97,0.97,0.01)$  &  92 & 86 & 95 & 85 & 92\\
        $BNB(1;0.97,0.97,0.02)$  &  94 & 88 & 100 & 89 & 93\\
        $BNB(1;0.98,0.98,0.01)$  &  92 & 84 & 97 & 85 & 92\\
        $BNB(1;0.99,0.99,0.01)$  &  91 & 84 & 96 & 83 & 91\\ [.08 cm]
        %\hline%%\hline
        $BNTA(0.21;0.01,0.01,0.98)$ & 93 & 86 & 98 & 85 & 92\\
        $BNTA(0.24;0.01,0.01,0.98)$ & 95 & 87 & 100 & 85 & 95\\
        $BNTA(0.26;0.01,0.01,0.97)$ & 93 & 85 & 97 & 86 & 93\\
        $BNTA(0.26;0.01,0.01,0.98)$ & 94 & 85 & 98 & 86 & 94\\
        $BNTA(0.28;0.01,0.01,0.97)$ & 93 & 86 & 96 & 86 & 94\\ [.08 cm]
        %\hline%%\hline
        $BPP(0.31;(0.2,0.2,0.1),(1.0,1.0,0.9))$ & 76 & 70 & 82 & 72 & 77\\
        $BPP(0.31;(0.2,0.2,0.1),(1.0,1.2,0.9))$ & 77 & 71 & 84 & 71 & 76\\
        $BPP(0.32;(0.2,0.2,0.1),(1.0,1.0,0.9))$ & 78 & 71 & 84 & 71 & 76\\
        $BPP(0.33;(0.2,0.2,0.1),(1.0,1.0,0.9))$ & 78 & 70 & 85 & 70 & 77\\
        $BPP(0.33;(0.2,0.2,0.1),(1.0,1.1,0.9))$ & 76 & 71 & 83 & 70 & 78\\      \hline\noalign{\smallskip}
        \multicolumn{3}{l}{\normalsize $^*\, d=1-\exp(-1)\approx$ 0.63212.}
    \end{tabular}
    %\tabnote{\textsuperscript{*}$d=1-\exp(-1)\approx$ 0.63212.}
\end{table}

\subsection{Real data set}

Now, the proposed test will be applied to a real data set. The data set comprises the number of accidents in two different years presented in \cite{Kocher}. Where $X$ is the accident number of the first period and $Y$ the accident number of the second period. Table \ref{datosreales} shows the real data set.\\
\begin{table}%[h]
    \caption{Real data of $X$ accident number in a period and $Y$ of another period.}
    \label{datosreales}
    \begin{tabular}{rrrrrrrrrrrrr}
        \hline\noalign{\smallskip}
        & & & \multicolumn{8}{c}{$X$} &\\
        & & & \multicolumn{1}{c}{0} & \multicolumn{1}{c}{1} & \multicolumn{1}{c}{2} & \multicolumn{1}{c}{3} & \multicolumn{1}{c}{4} & \multicolumn{1}{c}{5} & \multicolumn{1}{c}{6} & \multicolumn{1}{c}{7} & & Total \\
        \hline
        &0& & 117 & 96 & 55 & 19 & 2 & 2 & 0 & 0 & &291\\
        &1& & 61 & 69 & 47 & 27 & 8 & 5 & 1 & 0 & &218\\
        &2& & 34 & 42 & 31 & 13 & 7 & 2 & 3 & 0 & &132\\
        $Y$ &3& & 7 & 15 & 17 & 7 & 3 & 1 & 0 & 0 & &49\\
        &4& & 3 & 3 & 1 & 1 & 2 & 1 & 1 & 1 & &13\\
        &5& & 2 & 1 & 0 & 0 & 0 & 0 & 0 & 0 & &3\\
        &6& & 0 & 0 & 0 & 0 & 1 & 0 & 0 & 0 & &1\\
        &7& & 0 & 0 & 0 & 1 & 0 & 0 & 0 & 0 & &1\\
        \hline\noalign{\smallskip}
        &Total&& 224 & 226 & 150 & 68 & 23 & 11 & 5 & 1 & &708\\
    \end{tabular}
\end{table}

The $p-$value obtained from the statistic $V_{n,w}(\hat{\theta}_n)$
of the proposed test, with $a_1=1$ and $a_2=0$ applied to the real
values is 0.838, therefore, we decided not to reject the null
hypothesis, that is, the data seem to have a BHD. This is consistent
with the results presented by Kemp and Papageorgiou in \citep{kepa},
who performed the goodness-of-fit test $\chi^2$ obtaining a
$p-$value of 0.3078.

\section*{Acknowledgements}
    The authors would like to thank research projects DIUBB 172409 GI/C, DIUBB 192408 2/R and Fondo de Apoyo a la Participaci\'on a Eventos Internacionales (FAPEI) at Universidad del B\'io-B\'io, Chile.\\

\section*{Conflict of interest}

The authors declare that they have no conflict of interest.

%% The Appendices part is started with the command \appendix;
%% appendix sections are then done as normal sections
%% \appendix

%% \section{}
%% \label{}

%% For citations use:
%%       \citet{<label>} ==> Jones et al. [21]
%%       \citep{<label>} ==> [21]
%%

%% If you have bibdatabase file and want bibtex to generate the
%% bibitems, please use
%%
%%  \bibliographystyle{elsarticle-num-names}
%%  \bibliography{<your bibdatabase>}

%% else use the following coding to input the bibitems directly in the
%% TeX file.

%\section*{References}

\end{document}